\documentclass[11pt]{article}

\usepackage{amssymb}
\usepackage{amsmath}

\newtheorem{theorem}{Theorem}

\newenvironment{proof}{\mbox{\sc Proof:}}{$\Box$}
\newenvironment{proof*}{\mbox{\sc Proof:}\hspace{0.3 em}{\bf (*)}\hspace{0.3 em}}{$\Box$}

\renewcommand{\Re}{\mathbb{R}}

\parskip 1.8ex
\parindent=0cm

\begin{document}

\title{A Note on Absolutely Continuous Processes}
\author{Lars Tyge Nielsen \\ Department of Mathematics \\ Columbia University}
\date{January 2019}
\maketitle

\begin{abstract}
Every adapted absolutely continuous process has a predictable density. 
The set of adapted absolutely continuous processes equals the set of time integrals of progressive or predictable pathwise locally integrable processes.
\end{abstract}

\section{Introduction}

By analogy to continuous processes, a stochastic process is \emph{absolutely continuous} its paths are absolutely continuous---every path is the integral of some locally integrable ``density function''.

This paper shows that if an absolutely continuous process is adapted, then the density functions of the individual paths can be fitted together to form a predictable process.
In other words, the absolutely continuous process is the time integral of a predictable and pathwise locally integrable process.

Conversely, it is also true that the time integral of a predictable (or progressive) pathwise locally integrable process is adapted and absolutely continuous. Hence, the set of
adapted and absolutely continuous processes is exactly equal to the set of time integrals of predictable (or progressive) pathwise locally integrable processes.

The result is close to
Chung and Williams \cite[1990, Lemma 3.11 (ii)]{Chung-Williams:90}\index{Chung, K. L.}\index{Williams, R. J.}
and Letta \cite[1988, Proposition 3.1]{Letta:88}, and
the proof is almost the same, but spun to a different conclusion.

Chung and Williams show that if $\hat{a}$ is a jointly measurable and pathwise locally integrable process, then its time integral is an adapted process, and there exists a predictable process $a$ such that $\hat{a} = a$ almost everywhere (which will imply that the time integrals of $a$ and $\hat{a}$ are indistinguishable).

The difference is that we do not assume $\hat{a}$ to be jointly measurable.
Thus it is not relevant to ask whether the processes $\hat{a}$ and $a$ are equal almost everywhere. 
Instead, we assume that the time integral of $\hat{a}$ is adapted and show directly that it is identical to the time integral of $a$.

\section{Background Information}

Let $(\Omega,\mathcal{F})$ be a measurable space and let $F = (\mathcal{F}_{t})_{t \in [0,\infty)}$ be a filtration.
There is literally no probability measure involved in any of the concepts in this paper.

Let $\mathcal{B}(\Re)$, $\mathcal{B}([0,\infty))$, and $\mathcal{B}([0,t])$, $t \in [0,\infty)$ denote the Borel sigma-algebras on $\Re$, $[0,\infty)$, and $[0,t]$, respectively.

A stochastic \emph{process} is a mapping $X: \Omega \times [0,\infty) \rightarrow \Re$ such that for 
every $t \in [0, \infty)$, $X(t): \omega \mapsto X(t)(\omega) = X(\omega,t)$ is measurable with respect to $\mathcal{F}$ and $\mathcal{B}(\Re)$.

All processes in this paper are understood to be one-dimensional.
The generalization to higher-dimensional processes is trivial.

The process $X$ is \emph{measurable} if it is measurable with respect to $\mathcal{F} \otimes \mathcal{B}([0,\infty)$, and it is \emph{adapted} if for 
every $t \in [0, \infty)$, $X(t)$ is measurable with respect to $\mathcal{F}_{t}$.

A stochastic process $X$ is \emph{progressive} if
its restriction to
$\Omega \times [0,t]$ is measurable with respect to
$\mathcal{F}_{t} \otimes {\mathcal{B}}([0,t])$, for every $t \in [0,\infty)$.

Let $\mathcal{M}$ be the smallest
sigma-algebra with respect to which all progressive processes are measurable.
Then a process is progressive if and only if it is measurable with respect to $\mathcal{M}$.

Let $\mathcal{P}$ be the sigma-algebra on $\Omega \times [0,\infty)$ generated by the adapted and continuous processes.
Call it the \emph{predictable} sigma-algebra.
A process is \emph{predictable} if it is measurable with respect to $\mathcal{P}$.

Every predictable process is progressive, and every progressive process is measurable and adapted. Expressed differently,
$\mathcal{P} \subset \mathcal{M} \subset \mathcal{F} \otimes \mathcal{B}([0,\infty))$.

A function $f: [0,\infty) \rightarrow \Re$ is \emph{locally integrable} if it is measurable with respect to
$\mathcal{B}([0,\infty))$ and $\mathcal{B}(\Re)$ and
\[ \int_{0}^{t} |f| \, ds < \infty \]
for all $t \in [0,\infty)$.

A process $a$ is \emph{pathwise locally integrable} if every path of $a$ is locally integrable. In other words, for every $(\omega,t) \in \Omega \times [0,\infty)$,
\[ \int_{0}^{t} |a(\omega,s)| \, ds < \infty \]

If a process is progressive (in particular, if it is predictable) and pathwise locally integrable, then its time integral is an adapted (and absolutely continuous) process. Adaptedness follows from the Tonelli-Fubini theorem.

A function $F: [0,\infty) \rightarrow \Re$ is \emph{absolutely continuous} if there exists a locally integrable function $f: [0,\infty) \rightarrow \Re$ such that
\[ F(t) = \int_{0}^{t} f \, ds \]
for all $t \in [0,\infty)$. If so, then $F$ is differentiable at almost every $t \in (0,\infty)$ with $F'(t) = f(t)$.

A stochastic process $X$ is \emph{absolutely continuous} if every path of $X$ is absolutely continuous.

If $a$ is a progressive process such that
all paths of $a$ are locally integrable, then the mapping $X: \Omega \times [0,\infty) \rightarrow \Re$ defined by
\[ X(\omega,t) = \int_{0}^{t} a(\omega,s) \, ds \]
for all $(\omega,t) \in \Omega \times [0,\infty)$, is an adapted and, of course, absolutely continuous process.

\section{The Theorem}

\begin{theorem}
\label{adconv2-t}
If $X$ is an adapted and absolutely continuous process, then there exists a predictable process $a$ such that
all paths of $a$ are locally integrable and such that
\[ X(\omega,t) = \int_{0}^{t} a(\omega,s) \, ds \]
for all $(\omega,t) \in \Omega \times [0,\infty)$.
\end{theorem}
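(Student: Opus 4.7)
The plan is to produce $a$ as a pointwise limit of left-difference quotients of $X$ and to invoke Lebesgue's differentiation theorem separately on each path.

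First I would observe that $X$ itself is predictable. Every path of $X$ is absolutely continuous, hence continuous, and $X$ is adapted by hypothesis; since $\mathcal{P}$ is generated by the adapted continuous processes, $X$ is predictable. For each integer $n \geq 1$, set
\[
a_n(\omega, t) = n\bigl[X(\omega, t) - X(\omega, (t-1/n) \vee 0)\bigr].
\]
The shifted process $(\omega, t) \mapsto X(\omega, (t-1/n) \vee 0)$ is continuous in $t$ and adapted (because $(t-1/n)\vee 0 \leq t$ and $X$ is adapted), hence predictable; therefore so is $a_n$.

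Next, fix $\omega$ and let $f_\omega$ be any locally integrable density of the absolutely continuous path $X(\omega, \cdot)$. Then
\[
a_n(\omega, t) = n \int_{(t-1/n)\vee 0}^{t} f_\omega(s)\,ds,
\]
and Lebesgue's differentiation theorem yields $a_n(\omega, t) \to f_\omega(t)$ for almost every $t$. I would then define
\[
A = \bigl\{(\omega,t) : \liminf_n a_n(\omega,t) = \limsup_n a_n(\omega,t) \in \Re\bigr\}, \qquad a = \bigl(\liminf_n a_n\bigr)\mathbf{1}_A.
\]
Predictability of each $a_n$ transfers to $A$ and hence to $a$. On each path $a(\omega, \cdot) = f_\omega(\cdot)$ almost everywhere, so $a(\omega, \cdot)$ is locally integrable and $\int_0^t a(\omega, s)\,ds = \int_0^t f_\omega\,ds = X(\omega, t)$ for every $t$.

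The one genuinely delicate point is the opening observation: that absolute continuity supplies path continuity, which together with adaptedness places $X$ and its time-shifts in $\mathcal{P}$. Everything downstream is routine---predictability of the difference quotients, pathwise application of Lebesgue differentiation, and the standard liminf/limsup trick to measurably select the pointwise limit. Notably, the argument never requires joint measurability of a ``derivative'' of $X$; predictability of $a$ is built into the construction rather than extracted after the fact.
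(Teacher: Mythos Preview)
Your proof is correct and follows essentially the same route as the paper's: the paper forms the same left-difference quotients (written piecewise as $X(\omega,t)/h$ for $t<h$ and $(X(\omega,t)-X(\omega,t-h))/h$ for $t\ge h$, which coincides with your $a_n$ once one uses $X(\omega,0)=0$), shows they are adapted and continuous hence predictable, takes a sequence $h_n\to 0$, and defines $a$ as the pointwise limit on the predictable convergence set and zero off it. The remaining steps---pathwise Lebesgue differentiation and the conclusion that $a$ has locally integrable paths with $\int_0^t a(\omega,s)\,ds = X(\omega,t)$---are identical.
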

\begin{proof}
Since $X$ is absolutely continuous, there exists a mapping $\hat{a}: \Omega \times [0,\infty) \rightarrow \Re$ such that for each $\omega \in \Omega$, the mapping $t \mapsto \hat{a}(\omega,t)$
is locally integrable and
\[ X(\omega,t) = \int_{0}^{t} \hat{a}(\omega,s) \, ds \]
for all $(\omega,t) \in \Omega \times [0,\infty)$.

For each $h > 0$, define a mapping $X(h): \Omega \times [0,\infty) \rightarrow \Re$ by
\[ X(h)(\omega,t) =  \left\{ \begin{array}{cc} X(\omega,t)/h & \mbox{ if } 0 \leq t < h  \\
(X(\omega,t) - X(\omega,t-h))/h & \mbox{ if } 0 < h \leq t \end{array} \right. \]
Since $X$ is an adapted process, so is $X(h)$.
For each $\omega \in \Omega$, $X(h)(\omega,t)$ is continuous on $[0,h)$ and on $[h,\infty)$, and 
$X(h)(\omega,t) \rightarrow X(\omega,h)/h = X(h)(\omega,h)$ as $t \rightarrow h$, $t < h$. Hence, $X(h)$ is a continuous process. Since it is adapted and continuous, it is predictable.

Let $(h_{n})$ be a sequence of positive numbers such that $h_{n} \rightarrow 0$ as $n \rightarrow \infty$. 
Since $X(h_{n})$ is predictable for each $n$, the set
\[ A = \{ (\omega,t) \in \Omega \times (0,\infty) : X(h_{n})(\omega,t) \mbox{ converges} \} \]
is in the predictable sigma-algebra $\mathcal{P}$.

Define $a: \Omega \times [0,\infty) \rightarrow \Re$ by
\[ a(\omega,t) =  \left\{ \begin{array}{cc} \lim_{n} X(h_{n})(\omega,t) & \mbox{ if } (\omega,t) \in A \\
0 & \mbox{ otherwise } \end{array} \right. \]
Then $a$ is a predictable process.

Given $\omega \in \Omega$, the mapping $t \mapsto X(\omega,t): (0,\infty) \rightarrow \Re$ is differentiable at almost all $t$, with derivative $\hat{a}(t)$. Hence,
for almost all $t > 0$, $X(h_{n})(\omega,t) \rightarrow \hat{a}(\omega,t)$
as $n \rightarrow \infty$. This implies that $a(\omega,t) = \hat{a}(\omega,t)$ for almost all $t \in [0,\infty)$.

In particular, all paths of $a$ are locally integrable, and 
\[ X(\omega,t) = \int_{0}^{t} \hat{a}(\omega,s) \, ds  = \int_{0}^{t} a(\omega,s) \, ds \]
for all $(\omega,t) \in \Omega \times [0,\infty)$.
\end{proof}

\bibliographystyle{plain}
\bibliography{bookrefs,bookrefsadd,consolidated}

\end{document}